\theoremstyle{plain}
\theoremstyle{definition}
\theoremstyle{remark}
\newcommand{\n}{\noindent}
\newcommand{\bm}{\boldsymbol}
\newtheoremstyle{exampstyle}
  {\topsep} 
  {\topsep} 
  {\itshape} 
  {} 
  {\bfseries} 
  {.} 
  {.5em} 
  {} 
\theoremstyle{exampstyle}
\newtheorem{theorem}{Theorem}
\let\oldref\ref
\renewcommand{\ref}[1]{(\oldref{#1})}  
\renewcommand{\eqref}[1]{(\oldref{#1})}
\newbox\boxaddrone \newbox\boxaddrtwo
\def\N+{n\in\mathbb{N}^{+}}
\def\n{\partial{\overrightarrow{\bf n}}}
\def\1d{\mathcal{D}((-\Delta)^{\gamma_1+1/2})}
\def\2d{\mathcal{D}((-\Delta)^{\gamma_2+1})}
\begin{document}

\title{\large\textbf{Restoring the Discontinuous Heat Equation Source Using Sparse Boundary Data and Dynamic Sensors}}
\author[1]{Guang Lin\thanks{guanglin@purdue.edu }}
\author[2]{Na Ou\thanks{oyoungla@csust.edu.cn}}
\author[3]{Zecheng Zhang\thanks{zecheng.zhang.math@gmail.com}}
\author[4]{Zhidong Zhang\thanks{zhangzhidong@mail.sysu.edu.cn}}
\affil[1]{\normalsize{Department of Mathematics and School of Mechanical Engineering, Purdue University, West Lafayette, IN}}
\affil[2]{\normalsize{School of Mathematics and Statistics, Changsha University of Science and Technology, China}}
\affil[3]{\normalsize{Department of Mathematics, Florida State University, Tallahassee, FL}}
\affil[4]{\normalsize{School of Mathematics (Zhuhai), Sun Yat-sen University, China}}

\date{}
\maketitle

\begin{abstract}
\noindent This study focuses on addressing the inverse source problem associated with the parabolic equation. We rely on sparse boundary flux data as our measurements, which are acquired from a restricted section of the boundary.
While it has been established that utilizing sparse boundary flux data can enable source recovery, the presence of a limited number of observation sensors poses a challenge for accurately tracing the inverse quantity of interest. To overcome this limitation, we introduce a sampling algorithm grounded in Langevin dynamics that incorporates dynamic sensors to capture the flux information.
Furthermore, we propose and discuss two distinct sensor migration strategies. Remarkably, our findings demonstrate that even with only two observation sensors at our disposal, it remains feasible to successfully reconstruct the high-dimensional unknown parameters.
\\

\noindent Keywords: inverse source problem, dynamical observation points, moving sensors, Bayesian inverse problems, Langevin dynamics \\

\noindent AMS Subject Classifications: 35R30, 62F15, 62D05, 82C31
\end{abstract}

\section{Introduction}
\subsection{Mathematical model}
We give the mathematical statement of our interested inverse problem. Firstly the heat equation is given as follows:   \begin{equation}\label{PDE}
 \begin{cases}
  \begin{aligned}
    (\partial_t-\Delta)u(x,t)&=\chi_{{}_{D}}, &&(x,t)\in\Omega\times(0,T),\\
    u(x,t)&=0,&&(x,t)\in\partial\Omega\times(0,T)\cup \Omega\times\{0\}.
  \end{aligned}
  \end{cases}
 \end{equation} 
Here $\Omega\subset \mathbb R^2$ is the two-dimensional unit disc. In equation \eqref{PDE}, the discontinuous source $\chi_{{}_{D}}$ is the characteristic function and the support $D\subset \Omega$ is unknown. More precisely, if $x$ is included by $D$, the value of $\chi_{{}_{D}}$ is one; otherwise, it equals zero. We assume that $\partial D$ is sufficiently smooth.   
In this work, we will use the boundary flux data observed by the dynamic moving sensors to recover the unknown support $D$. The mathematical formulation of the measurements can be written as  
 \begin{equation*}
 \frac{\partial u}{\n}(z(t),t),\ z(t)\in \Gamma(t)\subset\partial\Omega,\ t\in(0,T).
 \end{equation*}
 Here $z(t)$ is the observation point and located in the boundary $\partial\Omega$; $\Gamma(t)$ is the observed area. We employ the notations $z(t)$ and $\Gamma(t)$ to emphasize that the sensor locations are time-dependent. Our analysis centers on sparse boundary data, where $\Gamma$ represents a very small subset of the boundary. The limited availability of observations on this sparse boundary presents substantial challenges in recovering $\chi_D$. To address these challenges and verify the proposed Algorithm \ref{algo_pseudo_algo}, we opt to rely solely on flux information obtained from two sensors, further enhancing the complexity of the problem.

\subsection{Background and literature}
In practical applications of inverse problems, sparse boundary data holds great importance. Consider Equation \eqref{PDE}, which can describe the diffusion of pollutants in a system, where the solution $u$ represents pollutant concentration. In such scenarios, the region of the pollutant source, denoted as $D$, should be heavily contaminated. To safeguard the health of engineers, the utilization of boundary measurements becomes imperative.

Furthermore, in the realm of practical inverse problems, the project's cost cannot be overlooked. This includes expenses related to equipment, labor, computation, and more. This cost-conscious perspective has motivated numerous researchers to seek solutions to inverse problems using sparse data.
 
However, when opting for sparse boundary data, it raises a pertinent question: how do we determine the sensors' location? Although one can establish the well-posedness of the inverse PDE problem, the sensors' locations significantly influence the quality of numerical simulation. Unfortunately, in the context of inverse problems, we often have to select observation areas randomly. In practice, it is entirely possible to initially choose suboptimal observation points, which can pose challenges.

To address this dilemma and ensure the quality of our approximations, we propose the concept of using moving observation points. Specifically, if the initial observation points are situated in less favorable locations, we can relocate the facilities to more suitable positions. Consequently, the data obtained from these improved locations will compensate for the adverse effects of the suboptimal points and enhance the accuracy of our approximations.

In line with this idea, the fundamental question we must address is how to determine whether a location is advantageous or not; that is, how to decide when and where to move the observation points.

The inverse source problem of the heat equation $(\partial_t-\Delta)u=F$ is a classical field in the literature of inverse problems, and abundant academic achievements are generated. See \cite{ChengLiu:2020,HuangImanuvilovYamamoto:2020,RundellZhang:2018,RundellZhang:2020,JinKianZhou:2021,Ikehata:2007,Isakov:1990,HettlichRundell:2001, LinZhangZhang} and the references therein. Nowadays, due to the significant application value of the sparse data, more and more researchers have paid attention to this field. Here we list several references on the inverse problems with sparse data. In \cite{HettlichRundell:2001}, the authors consider the inverse source problem of the heat equation on the two-dimensional unit disc. They recover the space-dependent source $f(x)$ by the flux data at finite points of the boundary. The conclusion in \cite{HettlichRundell:2001} is promoted by \cite{RundellZhang:2020}, in which the variable separable source $f_1(x)f_2(t)$ can be uniquely determined by the sparse boundary data. Note that in \cite{RundellZhang:2020}, the spatial term $f_1(x)$ and temporal term $f_2(t)$ are both unknown. In \cite{LiZhang:2020, LinZhangZhang}, the authors further recover the semi-discrete unknown source, which is investigated in this work, and the considered equation is the parabolic equation on a general domain. Also, for the geometric inverse problems, the authors in \cite{KianLiLiuYamamto:2021, HelinLassasYlinenZhang:2020} recover the manifold by the image of a single specific function. We should note that for such geometric inverse problems, people usually use a whole operator as the measurements.

\subsection{Main result and outline}
We summarize the contributions as follows.
\begin{enumerate}
    \item We study inverse problems with sparse measurements and moving observation points.  
    \item We introduce a sampling-based approach and a dynamic sensor migration algorithm designed for source tracing. Moreover, we present two migration strategies and validate their effectiveness through two challenging numerical examples.
\end{enumerate}

The rest of this article is organized as follows. In Section \ref{sec_prelim}, we will provide an overview of the existing problems. We then introduce the Bayesian methodology and the Langevin-based algorithm in Section \ref{sec_bayesian_method}, in which the uniqueness of this inverse problem is also discussed. In particular, we will discuss the proposed sampling methods and proposed dynamical sensors migration algorithm in Section \ref{sec_adaptive_pcn} and \ref{sec_real_algo}. Finally, we will verify the performance and introduce the sensors migration strategies in Section \ref{sec_numerical}.

\section{Preliminaries}
\label{sec_prelim}

 \subsection{The setting of the inverse problem}
 
This article is an extension of reference \cite{RundellZhang:2018, RundellZhang:2020}, in which the authors prove the uniqueness theorem of the inverse source problem under sparse data. More precisely, under suitable conditions, the flux data generated from two points of the boundary can uniquely determine the unknown source. However, as we mention in the introduction, when we solve this inverse problem numerically, there is a natural question that how to determine the locations of the observation points. This question holds paramount importance in computational work because the precision of the numerical approximation is profoundly influenced by the placement of observation points. To substantiate this assertion, we test a numerical experiment by the algorithm proposed in \cite{RundellZhang:2020} and obtain the comparison (please see Figure \ref{fig_comparison}). 

\begin{figure}[th!]
\center
\subfigure{
\includegraphics[trim = .5cm .5cm .5cm .5cm, clip=true,width=6cm,height=5.7cm]
{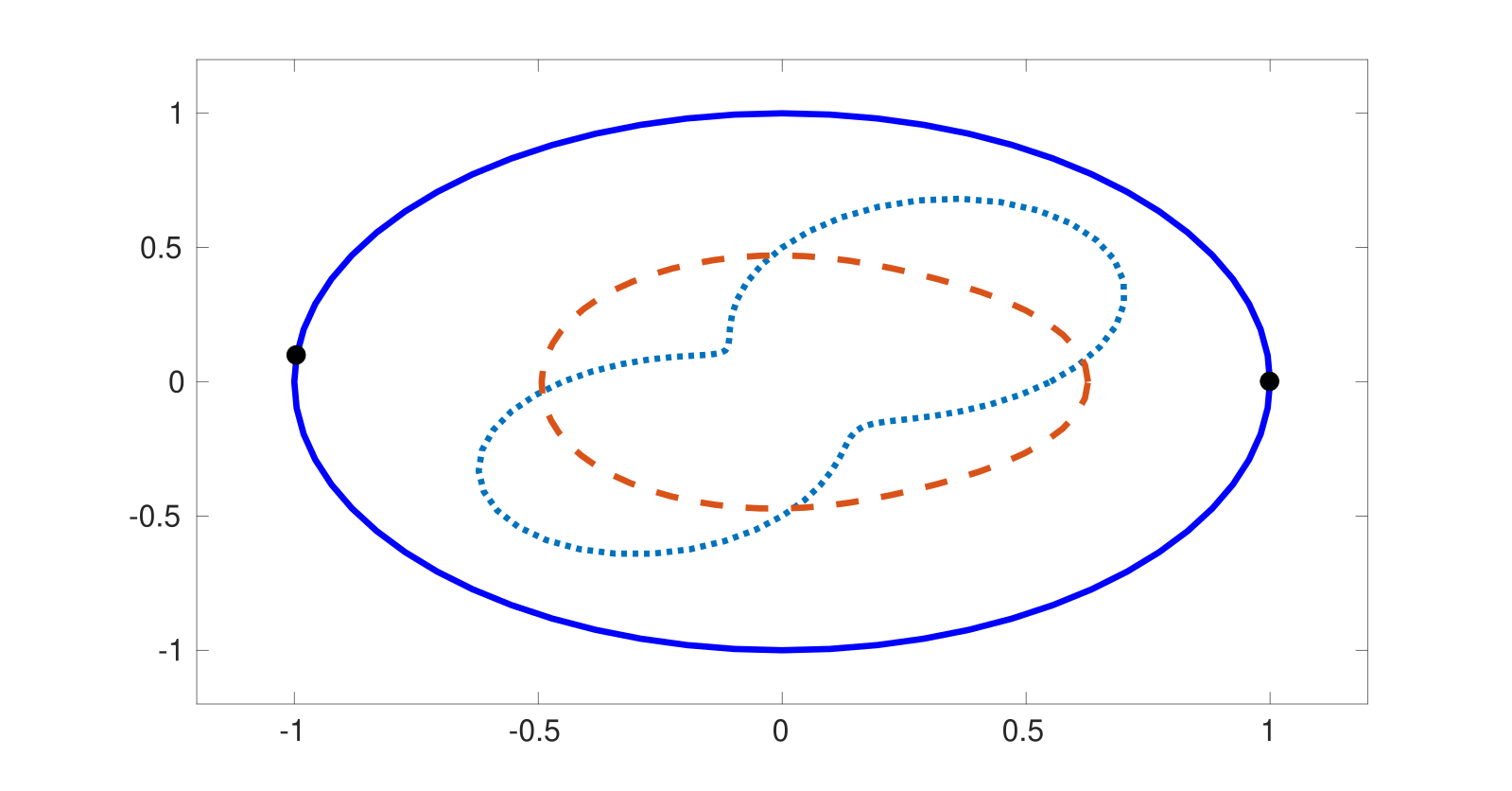}}
\subfigure{
\includegraphics[trim = .5cm .5cm .5cm .5cm, clip=true,width=6cm,height=5.7cm]
{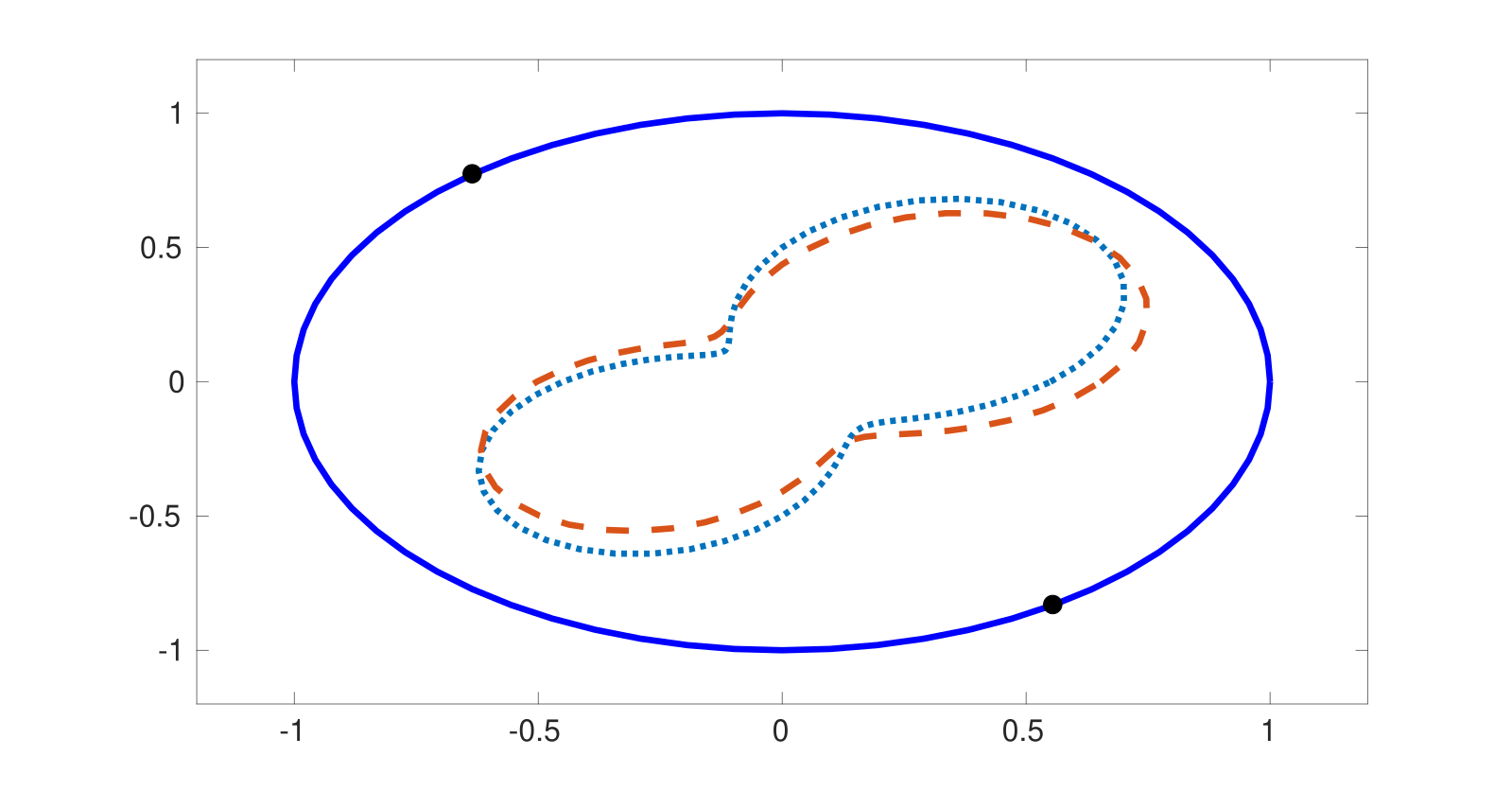}}
\caption{Numerical approximations with different observation points. 
The target source location is represented by the blue dashed curve, while the brown curve depicts the source recovered using only two sensors. The black dots situated along the boundary mark the positions of these sensors. It is evident from the images that the accuracy of the recovery process is contingent upon the placement of the sensors. }
\label{fig_comparison}
\end{figure}

In Figure \ref{fig_comparison}, the dotted blue line represents the support of the exact source, while the dashed red line corresponds to the support of the approximated source. The black points located along the boundary represent the observation points. It's evident that the support of the exact source in both figures is the same; the only difference lies in the placement of the observation points. Consequently, the two approximations exhibit significant disparities.

Evidently, in the left figure, the observation points are positioned unfavorably, earning them the label of `bad' points, while those in the right figure are considered `good'. However, it's important to recognize that in practical applications of inverse problems, obtaining the exact solution for the unknown source is often infeasible. Therefore, we are left with the task of selecting observation points largely based on chance.

If, by chance, we initially select `bad' points and do not alter their positions, it results in the production of poor numerical outcomes, as demonstrated in the left part of Figure \ref{fig_comparison}. This underscores the reason why we endeavor to leverage data from moving observation points to offset the approximations adversely affected by these 'bad' observation points.

Following \cite{HettlichRundell:2001, RundellZhang:2020}, the unknown support $D$ of the source can be uniquely determined when the observed area $\Gamma$ is fixed and only consists of two chosen points on the boundary. Hence, in this work, we will suppose that $\Gamma(t)$ consists of two points of $\partial \Omega$ for each $t\in(0,T)$. With the idea of moving observation points, the measurements we used in this work will be represented as  
  \begin{equation}\label{data}
  \frac{\partial u}{\n}(z_j(t),t),\ z_j(t)\in \partial\Omega,\ j=1,2,\ t\in (0,T).
 \end{equation}
 The notation $z_j(t)$ is the orbit of each observation point, and the essential difficulty of this work is how to determine the orbit $z_j$. We will provide some rules for determining $z_j(t)$ and give the corresponding numerical experiments. These will be discussed in the next sections.

\subsection{Bayesian framework}
A highly effective approach for tackling the inverse problem involves adopting the Bayesian inference framework, as referenced in prior works \cite{stuart2010inverse, lin2023b, efendiev2006preconditioning,chung2020multi,zhang2023bayesian}. In this context, we define the dataset as ${\bf d} = \{d_i\}_{i = 1}^{n_d}$ and represent the prior distribution of $\bm \xi$ as $p(\bm \xi)$. Utilizing the Bayesian formula, we can express this relationship as follows:
\begin{align*}
    p(\bm \xi|{\bf d}) \propto p(\bm \xi) p({\bf d}|\bm \xi).
\end{align*}
where $p(\cdot|\bm \xi)$ is a likelihood function given the parameter $\bm \xi$. 
where $p(\cdot|\bm \xi)$ represents a likelihood function conditioned on the parameter $\bm \xi$.

We denote the mapping from the target parameters $\bm \xi$ to the observations as the forward model, labeled as $g$. Given that observation data inherently contain noise and that noise is also introduced during the forward evaluation, we make the assumption that the likelihood follows a normal distribution with a mean of $\bf d$ and a variance of $\sigma^2$. Specifically, the likelihood function takes the following form:
\begin{align*}
    p({\bf d}|\bm \xi)=(2\pi\sigma^2)^{-\frac{n_d}{2}}\exp\bigg(-\frac{\|{\bf d}-g(\bm \xi)\|_2^2}{2\sigma^2}\bigg).
\end{align*}
One of the key advantages of employing the Bayesian inference method is our ability to obtain the distribution of the unknown parameters. This capability allows us to quantitatively assess the uncertainty associated with quantities of interest (QoI), such as the outputs of the forward model. The trajectory of the observation points is subsequently determined in a sequential manner using posterior samples of the QoI.

\section{Langevin diffusion and Bayesian sampling}
\label{sec_bayesian_method}
{In this study, we will employ the Bayesian approach to ascertain the trajectory of observation points and iteratively address the inverse source problem. To attain the desired posterior distribution $p(\bm \xi|{\bf d})$, various MCMC methods can be employed for sampling. Among these methods, the Langevin diffusion-based MCMC \cite{dalalyan2017further, dalalyan2019user, raginsky2017non, chen2014stochastic} stands out as a computationally efficient choice, particularly when dealing with high-dimensional and multimodal distributions \cite{raginsky2017non, li2023fast}.
Specifically,
let us define the energy function $U(\bm \xi)$ as,
\begin{align*}
    U(\bm \xi) = \log p(\bm \xi)+\sum_{i = 1}^{n_d} \log p(d_i|\bm \xi).
\end{align*}
We are interested in sampling from the following distribution:
\begin{align*}
    \pi(\bm \xi) \propto \exp(-\frac{U(\bm \xi)}{\tau}),
\end{align*}
where $\bm \xi \in \mathbb{R}^p$, $\tau$ is the temperature parameter, and the energy function $U$ is known. One method for generating samples from $\pi (\bm \xi)$ is to simulate a stochastic process whose stationary distribution is $\pi$. Under certain regularity conditions (see, e.g., \cite{bhattacharya1978criteria, roberts1996exponential}), the following stochastic differential equation (SDE), known as the preconditioned Langevin diffusion (LD) \cite{roberts2002langevin,nguyen2019nonasymptotic}, has a stationary distribution $\pi$:
\begin{equation}
    \mathrm{d}\bm \xi_t = -L\nabla U (\bm \xi_t) \mathrm{d}t + \sqrt{2 L \tau} \mathrm{d}W_t,
    \label{eqn_langevin}
\end{equation}
where $L$ is an arbitrary symmetric, positive-definite matrix, $W_t$ is the $p$-dimensional standard Brownian motion. Motivated by the replica-exchange methods \cite{lin2022multi, lin2023b}, \cite{na2022replica} extends the preconditioned Langevin diffusion-based methods to multiple chains to accelerate distribution simulation. 

In practical sampling scenarios, computing the gradient of the energy function can be computationally expensive, especially when dealing with PDE-based inverse problems that involve time-consuming forward model simulations. Moreover, in this work, the source term exhibits discontinuities concerning the unknown parameters. Therefore, we opt for the gradient-free Metropolis-Hasting (MH) MCMC method \cite{2004Markov, gamerman2006markov}.

To address the inverse problem, we will employ the adaptive preconditioned Crank-Nicolson MCMC (pCN-MCMC) method within a Bayesian inference framework.

\subsection{Adaptive preconditioned-Crank–Nicolson (pCN) }
\label{sec_adaptive_pcn}
As the sampling acceptance rate is independent of the dimension of parameters, the method of pCN-MCMC \cite{cotter2013mcmc} stands as an efficient strategy for addressing large-scale inference challenges. Specifically, it is based on the SDE
\begin{equation}
\mathrm{d}\bm \xi_t = -LB^{-1} \bm \xi_t \mathrm{d}t + \sqrt{2 L} \mathrm{d}W_t,
    \label{eqn_sde}
\end{equation}
which is a special case of equation (\ref{eqn_langevin}), where the prior is the Gaussian distribution ${\cal N}(0, B)$, the gradient of the log-likelihood vanishes in the drift term, and the temperature is $\tau=1$. The proposal is generated by applying the Crank–Nicolson (CN) schemes \cite{beskos2008mcmc, cotter2013mcmc} to the SDE (\ref{eqn_sde}),
\begin{equation}
\bm \xi^*=\bm \xi-\frac{1}{2}\delta LB^{-1}(\bm \xi+\bm \xi^*)+\sqrt{2\delta L}w,
    \label{eqn_CN}
\end{equation}
where $w \sim \mathcal{N}(0, I_p)$ represents samples from a multivariate normal distribution, and $\delta$ denotes the step size. When $L=B$ is chosen, the pCN proposal is resulted,
\begin{equation}
\bm \xi^*=\sqrt{1-\beta_1^2}\bm \xi+\beta_1 w, \ w\sim \mathcal{N}(0, B),
    \label{eqn_pCN}
\end{equation}
where 
\[
\beta_1=\frac{2\sqrt{2 \delta}}{2+ \delta}.
\]

Moreover, the equation (\ref{eqn_CN}) can also be rewritten as \cite{hu2017adaptive} 
\begin{equation}
\bm \xi^*=\sqrt{1-\beta_2^2CB^{-1}}\bm \xi+\beta_2 w, \ w\sim \mathcal{N}(0, C),
    \label{eqn_apCN}
\end{equation}
where 
\[
\beta_2\sqrt{C}=\frac{\sqrt{2 \delta}}{I+ \frac{1}{2}\delta L B^{-1}},
\]
yielding the adaptive pCN scheme, it is used to accelerate the convergence of the pCN MCMC method. One of the optimal choices of $C$ is the posterior covariance matrix, which is not directly available but can be determined by the historical samples empirically. For both the pCN and adaptive pCN scheme, the acceptance rate has the form
\begin{equation}
\alpha(\bm \xi^*, \bm \xi)=\min\left(1, \exp\left(\Phi(\bm \xi)-\Phi(\bm \xi^*)\right)\right).  
\label{acc-rate}
\end{equation}
where
\[
\Phi(\bm \xi)=\log p({\bf d}|\bm \xi),
\] 
please refer to Algorithm \ref{algo_ApCN} for more details on the adaptive pCN sampling method.

\subsection{Methodology}
\label{sec_real_algo}

As mentioned earlier, the uniqueness of this inverse problem has undergone comprehensive scrutiny in prior works \cite{RundellZhang:2018, RundellZhang:2020}. In this study, we align ourselves with the principles outlined in these references \cite{RundellZhang:2018, RundellZhang:2020} to articulate the uniqueness theorem.

\begin{theorem}\label{uniqueness}
Given two unknown supports $D_1$ and $D_2$ which have sufficiently smooth boundaries, we denote the solutions corresponding to $D_j$ by $u_j$, $j=1,2$. Recalling the data \eqref{data}, we assume that there exists $\epsilon_0>0$ such that 
$$z_j(t)=\theta_j\text{\ for\ } t\in(0,\epsilon_0),\ j=1,2,  \text{\ and\ } \theta_1-\theta_2\notin \pi \mathbb Q,$$ 
where $\mathbb Q$ is the set of rational numbers. If 
$$\frac{\partial u_1}{\n}(z_j(t),t)=\frac{\partial u_2}{\n}(z_j(t),t),\ j=1,2,\ t\in (0,T),$$
then $D_1=D_2$ in the sense of $L^2(\Omega)$. 
\label{thm_main}
\end{theorem}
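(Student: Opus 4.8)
The plan is to reduce the equality of the two sources to a spectral identity on the unit disc and then to extract $D_1=D_2$ from the single arithmetic condition $\theta_1-\theta_2\notin\pi\mathbb Q$. First I would put $w:=u_1-u_2$ and $f:=\chi_{D_1}-\chi_{D_2}$, so that $(\partial_t-\Delta)w=f$ in $\Omega\times(0,T)$ with $w=0$ on $\partial\Omega\times(0,T)\cup\Omega\times\{0\}$; since $f$ is a difference of characteristic functions it lies in $L^\infty(\Omega)\subset L^2(\Omega)$, hence $w(\cdot,t)\in C^1(\overline\Omega)$ and the pointwise boundary fluxes are genuinely defined. On the initial window the sensors are fixed, $z_j(t)\equiv\theta_j$ for $t\in(0,\epsilon_0)$, so the data hypothesis reduces to $\frac{\partial w}{\n}(\theta_j,t)=0$ for all $t\in(0,\epsilon_0)$, $j=1,2$ — and this is the only part of the measurements the uniqueness argument needs (the moving sensors on $(\epsilon_0,T)$ play no role here).

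Next I would expand in the explicit Dirichlet eigenbasis of the disc: the eigenvalues are $\lambda_{n,k}=j_{n,k}^2$, with $j_{n,k}$ the $k$-th positive zero of the Bessel function $J_n$, and eigenfunctions $\phi_{0,k}\propto J_0(j_{0,k}r)$ and, for $n\ge1$, the pair $\phi^{c}_{n,k}\propto J_n(j_{n,k}r)\cos n\theta$ and $\phi^{s}_{n,k}\propto J_n(j_{n,k}r)\sin n\theta$. Expanding $f$ in this basis and solving the heat equation mode by mode,
\[
\frac{\partial w}{\n}(\theta,t)=\sum_{n,k}\frac{1-e^{-\lambda_{n,k}t}}{\lambda_{n,k}}\Big(f^{c}_{n,k}\,\frac{\partial\phi^{c}_{n,k}}{\n}(\theta)+f^{s}_{n,k}\,\frac{\partial\phi^{s}_{n,k}}{\n}(\theta)\Big)
\]
(with the obvious single-term modification for $n=0$), where one checks that this series and its $t$-derivative converge uniformly for $t$ bounded away from $0$, the Gaussian factor $e^{-\lambda_{n,k}t}$ dominating the at-most-$\sqrt{\lambda_{n,k}}$ growth of the normal derivatives and $\sum|f^{c/s}_{n,k}|^2<\infty$. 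Consequently $t\mapsto\frac{\partial w}{\n}(\theta_j,t)$ is real-analytic on $(0,\infty)$, so its vanishing on $(0,\epsilon_0)$ forces it to vanish identically; differentiating in $t$ produces the exponential-sum identity $\sum_{n,k}e^{-\lambda_{n,k}t}\big(f^{c}_{n,k}\frac{\partial\phi^{c}_{n,k}}{\n}(\theta_j)+f^{s}_{n,k}\frac{\partial\phi^{s}_{n,k}}{\n}(\theta_j)\big)\equiv0$ for $j=1,2$.

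By linear independence of the exponentials $\{e^{-\lambda t}\}$ over distinct $\lambda$, the coefficient sum attached to each eigenvalue must vanish; here I would invoke the classical non-coincidence of Bessel zeros (Bourget's hypothesis, proved by Siegel) — $J_n$ and $J_m$ have no common positive zero for $n\ne m$ — so every eigenvalue $\lambda_{n,k}$ belongs to a single angular index $n$ and no frequency mixing occurs. For $n=0$ this yields $f_{0,k}\,\frac{\partial\phi_{0,k}}{\n}(\theta_j)=0$, and as $j_{0,k}$ is a simple zero of $J_0$ the normal derivative is nonzero, whence $f_{0,k}=0$. For $n\ge1$, writing $\frac{\partial\phi^{c}_{n,k}}{\n}(\theta_j)=C_{n,k}\cos n\theta_j$ and $\frac{\partial\phi^{s}_{n,k}}{\n}(\theta_j)=C_{n,k}\sin n\theta_j$ with $C_{n,k}\ne0$, the two sensors give, for each $k$,
\[
f^{c}_{n,k}\cos n\theta_j+f^{s}_{n,k}\sin n\theta_j=0,\qquad j=1,2,
\]
a homogeneous $2\times2$ system of determinant $\pm\sin\!\big(n(\theta_1-\theta_2)\big)$. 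Since $\theta_1-\theta_2\notin\pi\mathbb Q$ this determinant is nonzero for every $n\ge1$, so $f^{c}_{n,k}=f^{s}_{n,k}=0$. All Fourier--Bessel coefficients of $f$ vanish, hence $f=0$ in $L^2(\Omega)$, that is, $\chi_{D_1}=\chi_{D_2}$ almost everywhere (and, the boundaries being smooth, $D_1=D_2$).

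The main obstacle is this middle step. On the analytic side one must justify termwise differentiation of the eigenfunction series up to the boundary, so that $\frac{\partial w}{\n}(\theta_j,\cdot)$ really is the displayed series and analytic continuation together with the exponential-sum argument apply. More essentially, the whole reduction hinges on the non-coincidence of Bessel zeros: this is precisely what decouples the angular modes and collapses the problem to the scalar determinant $\sin(n(\theta_1-\theta_2))$, so that the single condition $\theta_1-\theta_2\notin\pi\mathbb Q$ suffices; were two modes of different frequency to share an eigenvalue, their coefficients would be coupled and the clean $2\times2$ reduction would break down. Once the requisite convergence is established, the analytic continuation and the linear independence of exponentials are routine.
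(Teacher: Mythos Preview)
Your argument is correct and follows essentially the same route as the paper: the paper simply invokes \cite[Theorem~3.1]{RundellZhang:2018} together with the real-analyticity in $t$ of the flux (coming from the exponentials $e^{-\lambda_{n,k}t}$), which is exactly the spectral reduction---Fourier--Bessel expansion, analytic continuation from $(0,\epsilon_0)$, linear independence of exponentials, Bourget's hypothesis, and the $2\times2$ determinant $\sin(n(\theta_1-\theta_2))$---that you have spelled out in detail. You have correctly identified that only the data on the initial window $(0,\epsilon_0)$, where the sensors are stationary, is needed for uniqueness.
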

\begin{proof}
The proof of Theorem \ref{uniqueness} follows from \cite[Theorem 3.1]{RundellZhang:2018} and the analyticity of the natural exponential function straightforwardly.   
\end{proof}

Theorem \ref{uniqueness} establishes that, under appropriate conditions, we can ascertain the precise support of the source using the data \eqref{data}. However, extensive experimental findings, as presented in \cite{LinZhangZhang, RundellZhang:2020}, suggest that relying solely on fixed-sensor flux observations may not yield accurate source predictions.
Furthermore, in both sampling methods \cite{LinZhangZhang, stuart2010inverse} and the classical Levenberg–Marquardt iteration, the accuracy of source capture is greatly contingent on sensor placement, particularly when only using a limited number of sensors. 

Inspired by Theorem \ref{uniqueness}, we introduce an algorithm that introduces variability in the locations of observation points (sensors). For a comprehensive understanding of the procedure, we direct your attention to Algorithms \ref{algo_pseudo_algo} and \ref{algo_ApCN}. It's worth noting that the option to select new sensor locations at random is a possibility. However, in Section \ref{sec_sensor_moving_circle} and \ref{sec_sensor_moving_peanut}, we delve into two distinct approaches for altering the sensor locations, which we detail further.

\begin{algorithm}[h!]
\caption{Peudo-algorithm for dynamical locations determination.}
    \label{algo_pseudo_algo}
Initialization: two observation points (sensors) $p_1$ and $p_2$, constant $n$, observation time stamps $t' = [T_1,...,T_{n}]^{\intercal}$,  current observation time $\tilde{t} = T_1$, initial PDE simulation time $t_0 = 0$, \textcolor{black}{ and the total sampling iteration limits $K$};

\For{$k = 1$ to $K-1$}{
Observe the flux at $p_1$ and $p_2$\;
Simulate the PDE solution until $\Tilde{t}$ from $t_0$ for the computation of $\Phi(\cdot)$\;
Obtain posterior samples $\{\bm \xi_i\}_{i=1}^{N}$ and $\{U_i\}_{i=1}^{N}$ according to the sampling schemes presented in Algorithm \ref{algo_ApCN} \;
Change two observation points $p_1$ and $p_2$\ basing on posterior samples \;
Set initial time $t_0 = \tilde{t}$\;
Set the mean of $\{U_i\}_{i=1}^{N}$ to be the initial value of the PDE \;
Pick up and set $\tilde{t}$ to be the next observation time $T_{k+1}$ from $t'$\;
    }

\end{algorithm}

\begin{algorithm}[h!]
	\caption{The adaptive pCN MCMC.}
	\label{algo_ApCN}
    \KwIn{The covariance matrix $B$ of the prior, parameters $\beta_1$, $\beta_2$, noise of the observed data $\sigma$, the initial guess $\bm \xi_0$, current observation time $\tilde{t}$. The number of iterations $N_1$ and $N$, the update frequent number $k_0$. }
    \KwOut{Posterior samples $\{\bm \xi_i\}_{i=1}^{N}$ and the PDE solution $\{U_i\}_{i=1}^{N}$ at time $\bar t$. }
    
	\BlankLine

    \For {$k=1:N_1$}{

       Generate the candidate $\bm \xi^*$ by the pCN scheme (\ref{eqn_pCN})

       Accept $\bm \xi^*$ as $\bm \xi_{k}$ with the rate $\alpha(\bm \xi^*, \bm \xi_{k-1})$

       }
     Collect the posterior samples of the parameters $\{\bm \xi_i\}_{i=1}^{N_1}$ and the PDE solution $\{U_i\}_{i=1}^{N_1}$ at time $\bar t$.
     
    Compute the empirical covariance matrix of the $\{\bm \xi_i\}_{i=1}^{N_1}$ as $C$.

    \For {$k=N_1+1:N$}{

       \If {$mod(k,k_0+1)=0$} {

       Update the empirical covariance matrix of $\{\bm \xi_i\}_{i=1}^{k}$ as $C$

       }

       Propose the candidate $\bm \xi^*$ by the adaptive pCN scheme (\ref{eqn_apCN})

       Accept $\bm \xi^*$ as $\bm \xi_{k}$ with the probability $\alpha(\bm \xi^*, \bm \xi_{k-1})$

       }
     Store the posterior samples of the parameters and PDE solutions at time $\bar t$.
\end{algorithm}

\section{Numerical experiments.}
\label{sec_numerical}
In this section, we will conduct numerical experiments to test the performance of the algorithm.
We shall delve into scenarios involving two distinct unknown sources, each characterized by disparate geometries and dimensions pertaining to the unknown parameters. Across both sets of problems, the task involves measuring boundary observation flux at two distinct points along the boundary—an endeavor that has proven to be particularly challenging for conventional implementation methodologies \cite{RundellZhang:2020, RundellZhang:2018}. Employing a comparative approach alongside the fixed observation points method, our analysis reveals a consistent trend: the methods we propose adeptly capture the source.

Consider the problem
\begin{equation*}
\frac{\partial u}{\partial t}-\Delta u= b\chi_D(x), \ \  \ t\in [0, T]
\end{equation*}
the physical domain is $\Omega:=\{(x, y)|x^2+y^2<1\}$. $b$ represents the predefined intensity of the source term. $\chi_D$ denotes the characteristic function related to the domain $D$, where $D$ is the unspecified region in need of reconstruction.
We will specify $D$ in the experiments later.
The observations are the flux $\frac{\partial u}{\partial \nu}$ collected at some time and locations along $\partial \Omega$. 

For the convenience of solving the forward model, we transform the Cartesian plane to the polar coordinates, i.e.,
\[
r=\sqrt{x^2+y^2},\ \ \theta=\tan{\frac{y}{x}},
\]
the domain is then transformed to be $\tilde \Omega:=\{(r, \theta)|0<r<1, 0 \leq \theta\leq 2\pi\}$. The equation becomes
\begin{equation}\label{rthetaEq}
\frac{\partial u}{\partial t}-\left(\frac{1}{r}\frac{\partial }{\partial r} (r \frac{\partial u}{\partial r})+\frac{1}{r^2}\frac{\partial^2 u}{\partial \theta^2}\right)= b\chi_D(r, \theta), t\in [0, T].
\end{equation}
The forward model (\ref{rthetaEq}) is solved by the finite difference method. The measurement locations are sequentially determined during the inference following Algorithm \ref{algo_pseudo_algo}.

\subsection{Circle-shape source}
In this experiment, we consider the following unknown domain $D$,
\[
(x-\eta_1)^2+(y-\eta_2)^2<0.2^2,
\]
where ${\bm \eta}=[\eta_1, \eta_2]^{\intercal}$ are the unknowns. The corresponding expression under the polar coordinates is
\[
(r\cos{\theta}-\rho\cos{\omega})^2+(r\sin{\theta}-\rho\cos{\omega})^2<0.2^2,
\]
where $(\rho, \omega)$ are the unknown parameters. As the range of the samples is $(-\infty, +\infty)$ for this algorithm, while the range of our parameters is finite, we use the bijection map
\begin{eqnarray*}
\rho(\xi_1) &=& \frac{1}{\pi}\arctan{\xi_1}+\frac{1}{2}, \\
  \omega( \xi_2) &=& 2\arctan{\xi_2}+\pi
\end{eqnarray*}
to map ${\bm \xi}=(\xi_1, \xi_2)^{\intercal}$ to the intervals $(0, 1)$ and $(0,2\pi)$, respectively. The ${\bm \xi}$ become the unknowns endowed with the prior $\mathcal{N}(0, I_p)$ to be inferred. The relationship between ${\bm \xi}$ and the location of the center is
\begin{eqnarray*}
\eta_1 &=& \rho(\xi_1)\cos{\omega(\xi_2)}, \\
\eta_2 &=& \rho(\xi_1)\sin{\omega(\xi_2)}.
\end{eqnarray*}
The statistical properties of ${\bm \eta}$ can be estimated through the posterior samples of ${\bm \xi}$.

The forward model is computed on a spatial grid of dimensions $33\times 36$, with a predefined strength of $b=50$. The observed data is generated using ground truth parameters ${\bm \eta}=[0, 0.5]^{\intercal}$, implying that the true parameter vector $\bm \xi$ is given by $(\xi_1,  \xi_2)^{\intercal}=(0, -1)^{\intercal}$. The measurement error variance is $\sigma^2=0.05^2$. We perform a total of $N_1=0$ initial iterations, followed by $N=10^4$ iterations, with updates occurring every $k_0=2.5\times 10^3$ iterations. In both the pCN and adaptive pCN schemes, the tunable parameters $\beta_1$ and $\beta_2$ are set to be equal to ensure an acceptance rate of 30\% to 40\%.

\begin{figure}[h!]
  \centering
  \subfigure[]{
  \includegraphics[width=0.31\textwidth,height=1.9in]{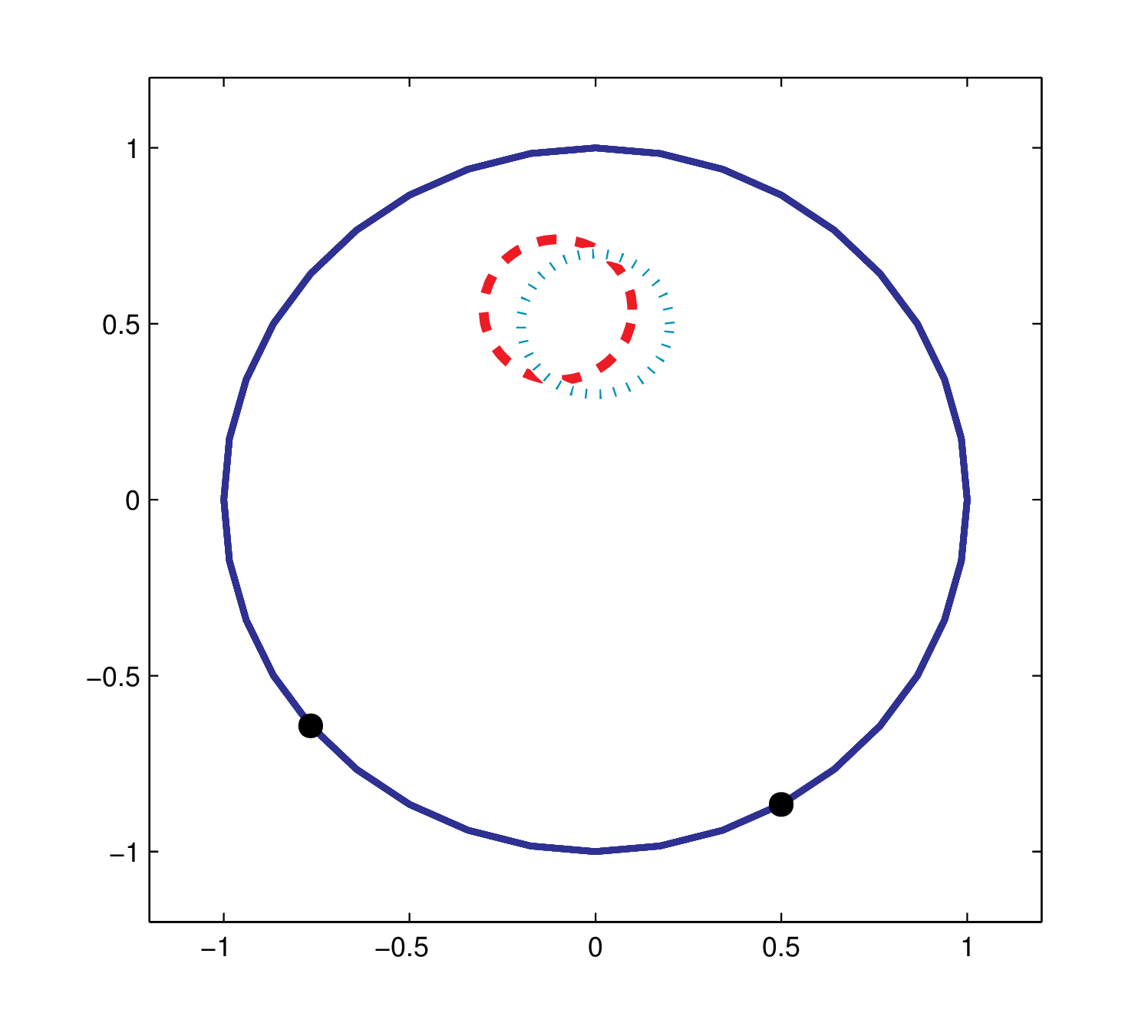}}
  \subfigure[]{
  \includegraphics[width=0.31\textwidth,height=1.9in]{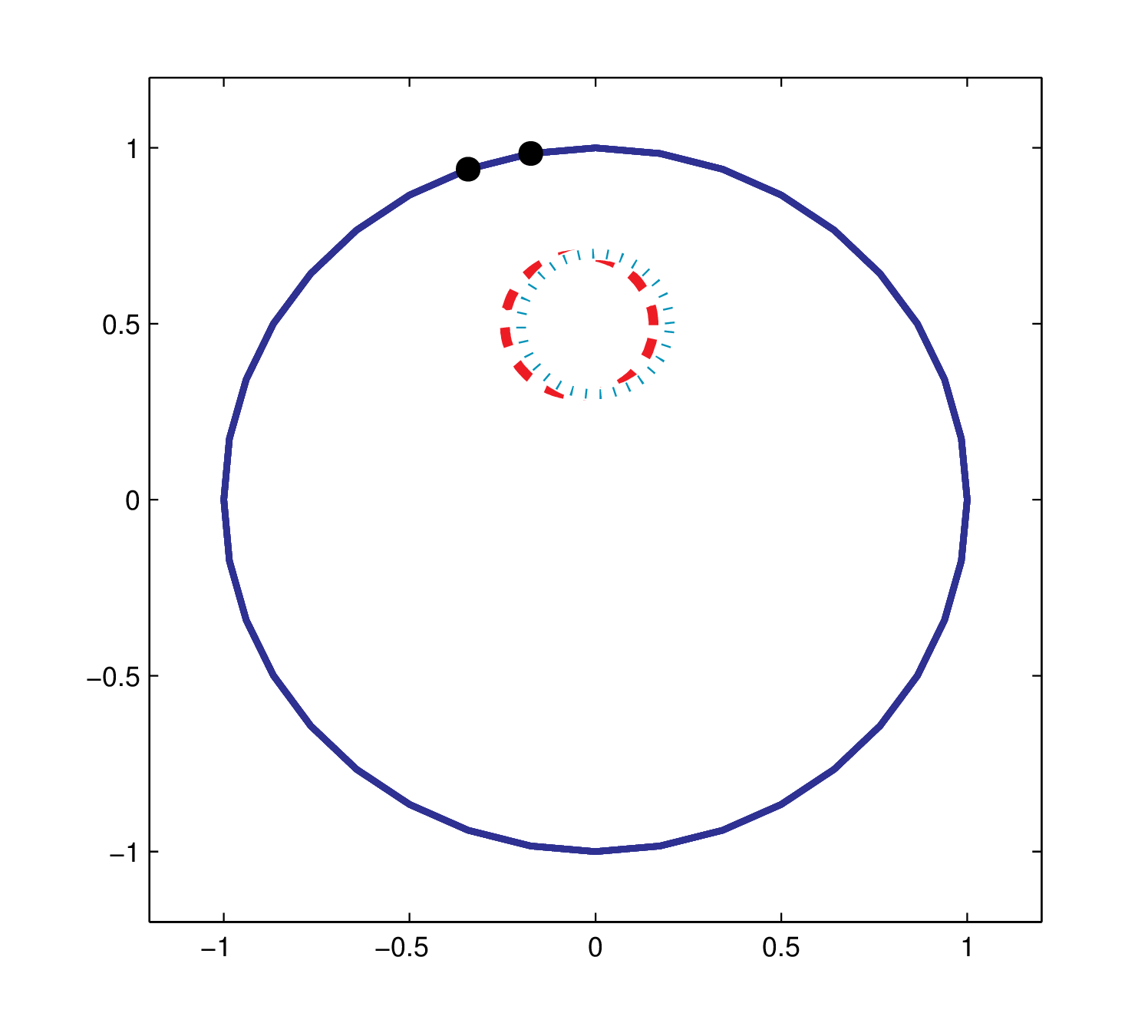}}
  \subfigure[]{
  \includegraphics[width=0.31\textwidth,height=1.9in]{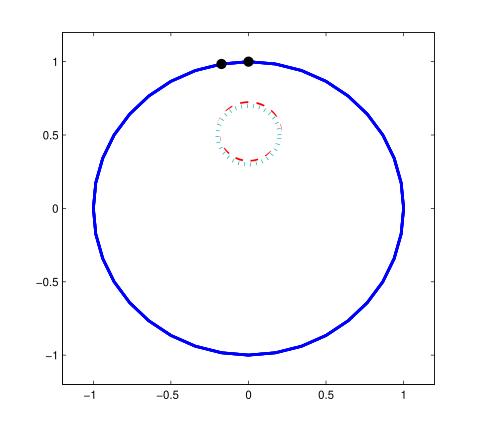}}  
 \caption{Estimates of $\bm \eta$ using posterior sample mean. The center of the blue dashed circle is the true $\bm \eta$, while the center of the red dashed circle is the average of all prediction samples. Flux measurements are obtained through two sensors (black dots) situated along the simulation boundary, with their positions dynamically adjusted over time according to Algorithm \ref{algo_pseudo_algo}. Specifically, the simulation is first performed to compute the flux at $T_1=0.5$, with sensors located at $(\frac{22}{18}\pi, \ \frac{30}{18}\pi)$. Subsequently, flux calculations are performed at $T_2=1$, with sensors positioned at $(\frac{10}{18}\pi, \ \frac{11}{18}\pi)$, and at $T_3=1.5$ with sensor placement at $(\frac{10}{18}\pi, \ \frac{9}{18}\pi)$. For further details on the sensor migration strategy, refer to Section \ref{sec_sensor_moving_peanut} } \label{Circle}
\end{figure}

\begin{figure}[h!]
  \centering
  \subfigure[]{
  \includegraphics[width=0.31\textwidth,height=1.9in]{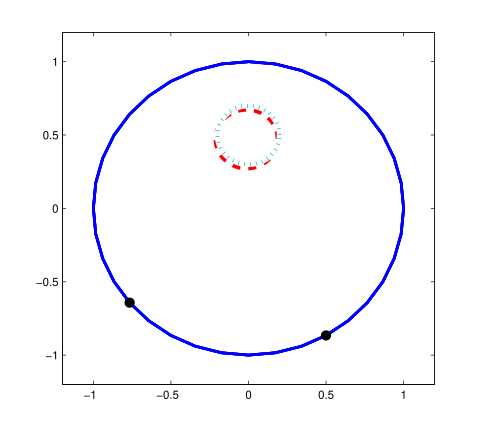}}
  \caption{Posterior mean estimations of $\bm \eta$ under static sensor positions. The sensors are initially positioned at $(\frac{22}{18}\pi, \ \frac{30}{18}\pi)$. In contrast to the dynamic sensor adjustment recommended by Algorithm \ref{algo_pseudo_algo}, we maintain a fixed sensor location and capture flux measurements at time instances $t'=[0.5, 1, 1.5]^{\intercal}$.} \label{Circle-fix}
\end{figure}

\subsubsection{Sensors migration rule}
\label{sec_sensor_moving_circle}
{
The parameters $\xi_1$ and $\xi_2$ represent the radius and center coordinates of the target circle. As the observation locations approach the target area, the gathered data becomes more informative. Since the target area is circular, the proximity of the observation locations to the domain can be quantified by measuring the distance between the center and the observation locations. Hence, the moving observation locations are determined based on the samples of $\xi_2$, specifically,
\begin{align}\label{rule of orbit1}
    \text{New sensor location} = (\lfloor {\bar \omega}/h_\theta\rfloor, \lceil  {\bar \omega}/h_\theta \rceil). 
\end{align}
where $\bar \omega$ is the posterior mean of $\omega(\xi_2)$, $h_\theta$ is the spatial step of the $\theta$ direction, yielding $\lfloor {\bar \omega}/h_\theta\rfloor$ and $\lceil  {\bar \omega}/h_\theta \rceil$ the index of locations along the boundary.}

Firstly, we take two measured locations along the edge randomly at time $T_1=0.5$, and draw samples from the posterior density. We then calculate the posterior mean of $\omega(\xi_2)$ as $\bar \omega$, and determine the new sensors location index as $\lfloor {\bar \omega}/h_\theta\rfloor$ and $\lceil  {\bar \omega}/h_\theta \rceil$ at the next time layer $T_2$. Obviously, the selected observation points $p_1$ and $p_2$ are adjacent. The process is repeated until the observation time is $\tilde t=T_3$, and the forward model simulation and Bayesian inference are then stopped. The results of the dynamic procedure are displayed in Figure \ref{Circle}.

For comparison purposes, we maintain the sensor location fixed at $(\frac{22}{18}\pi, \frac{30}{18}\pi)$, and flux measurements are taken at time instances $t'=[0.5, 1, 1.5]^{\intercal}$, respectively. {The measurement error remains consistent for both the fixed location strategy and the strategy involving sequentially determined locations. In other words, for each observed time $T_i$, the noise introduced to the flux at the fixed locations $(\frac{22}{18}\pi, \frac{30}{18}\pi)$ is identical to the noise introduced to the flux at the sequentially determined locations}. In Figure \ref{Circle-fix}, the posterior mean of the samples is represented by the red circle, offering an accurate estimate of the target region's center. However, Figure \ref{Circle-COM} reveals a distinct pattern: the variance of the samples acquired through sequentially determined sensor placements is significantly reduced compared to those obtained using the fixed measurement strategy. This outcome is a result of the fact that sequentially determined measurements provide more informative data, leading to a substantial reduction in uncertainty.

\begin{figure}[h!]
  \centering
  \subfigure[]{
  \includegraphics[width=0.45\textwidth,height=2.1in]{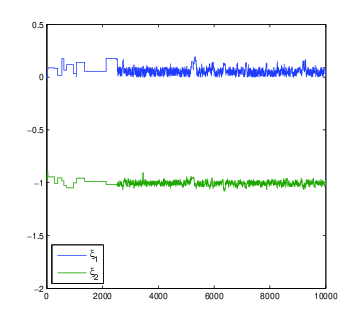}}
  \subfigure[]{
  \includegraphics[width=0.45\textwidth,height=2.1in]{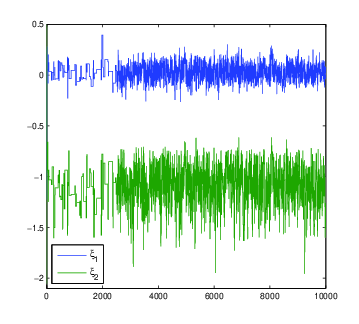}} 
  \caption{
  Trace plots illustrating posterior samples across distinct measurement strategies. The blue curve represents samples for $\xi_1$, while the green curve corresponds to $ \xi_2$.
(a) Utilizing sequentially determined locations as recommended by Algorithm \ref{algo_pseudo_algo}.
(b) Employing fixed sensors.
The visual analysis of the plots indicates that the strategy involving fixed sensor positions results in a broader confidence interval. Conversely, the proposed strategy yields predictions with significantly reduced variance. }\label{Circle-COM}
\end{figure}

We additionally performed a comparative analysis between our proposed method and a random selection strategy for measurement locations. To gain a more comprehensive understanding of the stochastic nature, we conducted two experiments with distinct measurement locations. The trace plots of posterior samples for $\bm \xi$ are presented in Figure \ref{Circle-COM2}. It is evident that the dynamical sensors placement method, utilizing both the proposed varying method as detailed in Section \ref{sec_sensor_moving_circle}, and the random varying strategy successfully capture the inverse Quantities of Interest (QoIs). Nevertheless, it is important to highlight that the random varying strategy results in a broader confidence interval.

\begin{figure}[h!]
 \centering
   \subfigure[]{
  \includegraphics[scale = 0.35]{graphs/Ctrace_seq.png}}
  \subfigure[]{
  \includegraphics[scale = 0.35]{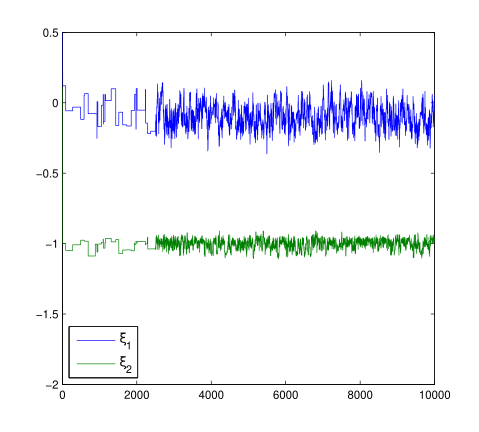}}
  \subfigure[]{
 \includegraphics[scale = 0.35]{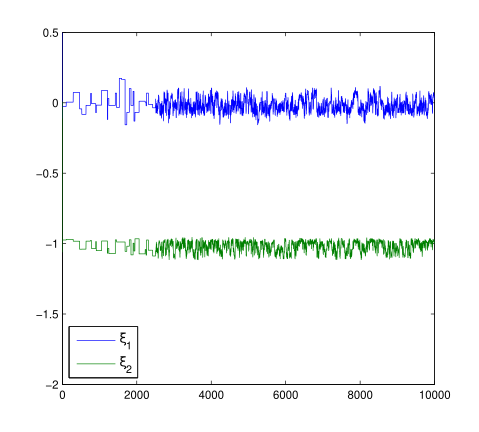}} 
 \caption{
 Trace plots illustrating posterior samples across distinct measurement strategies. The blue curve represents samples for $\xi_1$, while the green curve corresponds to $ \xi_2$.
Figure (a): utilizing sequentially determined locations as recommended by Algorithm \ref{algo_pseudo_algo} with \textcolor{black}{dynamical sensor placement strategy detailed in Section \ref{sec_sensor_moving_circle}}.
Figure (b) and (c): using Algorithm \ref{algo_pseudo_algo} with random sensor placement strategy; please note that \textcolor{black}{we repeat the experiments twice to comprehensively study the performance.}
It is evident that adopting a dynamic sensor placement strategy (left figure) leads to a narrower confidence band, signifying decreased variance among all the samples. Nevertheless, even randomly varying sensors, as illustrated in (b) and (c), still exhibit considerably lower variance compared to the fixed sensor approach, as demonstrated in Figure \ref{Circle-COM} (b).
}
\label{Circle-COM2}
\end{figure}

\subsection{High dimensional peanut-shape source}
In this example, we examine a problem of higher dimensions with asymmetrical characteristics \cite{RundellZhang:2018}.
Specifically, the unknown domain has the boundary $\partial D$ that can be parameterized as
\[
\partial D =\{q(\theta; \bm \xi)(\cos(\theta), \sin(\theta))^T: \theta \in [0, 2\pi] \},
\]
with a smooth, periodic function $q(\theta; \bm \xi)\in (0, 1)$, which has the form
\begin{align}
    q(\theta; \bm \xi)=\frac{1}{2}\xi_1+\sum_{i=1}^M\bigg( \xi_i\cos(i\theta)+\xi_{i+1}\sin(i\theta)\bigg).
    \label{eqn_peanut}
\end{align}
To ensure the smoothness of the approximation, we follow the paper \cite{RundellZhang:2018} and set the penalty term to be the $H^2$ norm of $q(\theta; \bm \xi)$, which implies the Gaussian prior density $ {\cal N}(0, B)$ for $\bm \xi$, where the covariance matrix $B\in {\mathbb R}^{(2M+1)\times (2M+1)}$ is diagonal, with entries
\[
B_{1,1}=1, \quad B_{i+1,i+1}=B_{i+M+1,i+M+1}=\frac{1}{i^2}, \quad i=1, \cdots, M.
\]

The true parameter is $\bm \xi=[1, 0, 0, 0, 0.3]^{\intercal}$, the strength is preset as $b=10$, the variance of the measurement error is $\sigma^2=0.01^2$. The iteration numbers are $N_1=1\times 10^3$, $N=1.5\times10^4$, the update frequent number is $k_0=2.5\times 10^3$. The parameters in the pCN and adaptive pCN scheme are tuned separately, both lead to the acceptance rate $30\%\sim 40\%$.

We first assess the random sensor relocation method and showcase the corresponding outcomes in Figure \ref{FPeanut-mean} (b).
We compare the proposed method with the fixed-sensor approach, and as shown in Figure \ref{FPeanut-mean} (a), the limited data available to the fixed-location strategy hinders its ability to accurately capture the shape of the target source $D$. 
\begin{figure}[h!]
  \centering
  \subfigure[ ]{
  \includegraphics[width=0.31\textwidth,height=1.9in]{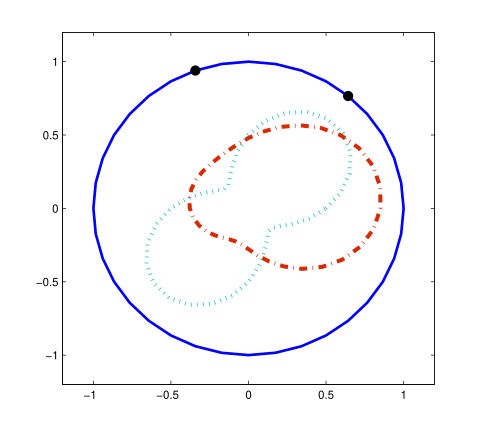}}
  \subfigure[ ]{
  \includegraphics[width=0.31\textwidth,height=1.9in]{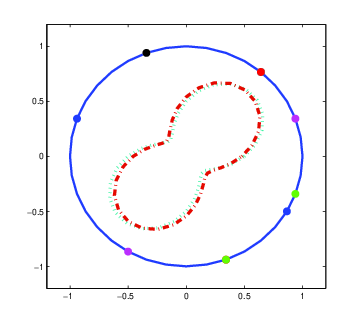}}
  \caption{Posterior mean estimates of $q(\theta,\bm \xi)$ for sensors at time $t= T_5$ with(a) fixed sensor locations $(\frac{11}{18}\pi, \ \frac{5}{18}\pi)$ and (b) random moving sensors at time: $T_1: (\frac{11}{18}\pi, \ \frac{5}{18}\pi)$, $T_2: (\frac{29}{18}\pi, \ \frac{5}{18}\pi)$, $T_3: (\frac{16}{18}\pi, \ \frac{33}{18}\pi)$, $T_4: (\frac{29}{18}\pi, \ \frac{34}{18}\pi)$, $T_5: (\frac{24}{18}\pi, \ \frac{2}{18}\pi)$. The shape of the high dimensional source is accurately captured by the moving sensors strategy, while the fixed locations strategy performs poorly.}
  \label{FPeanut-mean}
\end{figure}
Despite some overlap in measurement locations across different time points, the data collected by randomly relocating sensors proves to be sufficiently informative for capturing the intricate shape of the high-dimensional source. As depicted in Figure \ref{fig_FPeanut_trace}, the samples derived from the randomly moving sensors strategy tend to cluster around or in proximity to the inverse Quantities of Interest (QoIs) denoted as $\bm \xi$, whereas the fixed-sensor approach struggles to identify most of the QoIs, with the exception of $\xi_1$.
\begin{figure}[h!]
  \centering
  \subfigure[ ]{
  \includegraphics[width=0.45\textwidth,height=2.1in]{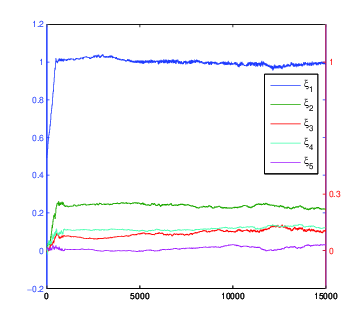}}
  \subfigure[ ]{
  \includegraphics[width=0.45\textwidth,height=2.1in]{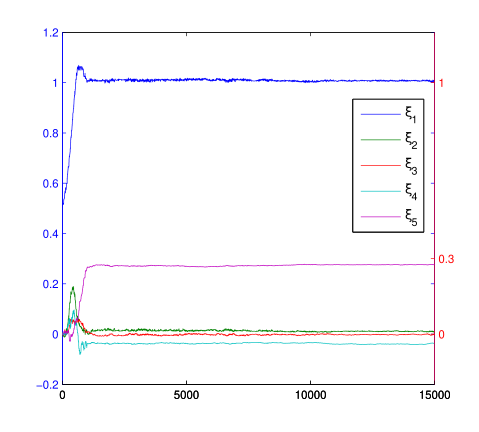}}
  \caption{Trace plots illustrating posterior samples at all iteration steps.  Different color represents different unknown parameters, with true parameters $\bm \xi=[1, 0, 0, 0, 0.3]^{\intercal}$.
  Left (a): 
 the sensors' location are fixed at $(\frac{11}{18}\pi, \ \frac{5}{18}\pi)$. Right (b): randomly move the sensors along the boundary. The samples of $\xi_1$ and $\xi_5$ concentrates around or near the true value $1$ and $0.3$ for the moving sensors strategy, the components $ \xi_2$, $ \xi_3$, and $\xi_4$ are also around the true value $0$.}
  \label{fig_FPeanut_trace}
\end{figure}

\subsubsection{Sensors migration rule}
\label{sec_sensor_moving_peanut}
The random sensor relocation strategy demonstrates superior performance compared to the fixed-sensor approach. However, it remains challenging for us to determine the optimal time to relocate the sensors. To address this issue, we introduce a second effective approach that can help us identify when it is appropriate to cease relocating the sensors.
 
Our approach revolves around relocating the two sensors to locations characterized by the highest flux variance observed across all sampling iterations. To be precise, let the set of flux values at position $\alpha$ be denoted as $\mathcal{F}_{\alpha} = \{f_\alpha^i \}_{k}$, where $k$ represents the index of the samples. We then proceed to shift one sensor to:
\begin{align}\label{rule of orbit}
    \text{New sensor location} = argmax_{\alpha} \text{var}(\mathcal{F}_\alpha). 
\end{align}
The second sensor is subsequently moved to the position exhibiting the second largest variance.
It's important to emphasize that this strategy incurs no additional costs.
To compute the flux at the sensor locations, we must evaluate the equation's solution using the sampled source, guided by the acceptance rate (\ref{acc-rate}). This enables us to compute the flux variance across all sampling iterations at any given point along the domain boundary. Notably, higher variance corresponds to a larger confidence interval and a less confident prediction. Consequently, our strategy involves relocating the sensors to positions characterized by the greatest variance. 

Following the aforementioned rule, we sequentially ascertain the measurement locations and present the posterior mean of the prediction samples in Figure \ref{Peanut}. Notably, the locations at time $T_5$ coincide with those at time $T_3$. Specifically, the suggested locations at time $T_5$ are $(\frac{21}{18}\pi, \ \frac{20}{18}\pi)$. This recurrence of the location $\frac{20}{18}\pi$ also appears at $T_4$, indicating a pattern of repeated cycling between the locations $(\frac{4}{18}\pi, \ \frac{5}{18}\pi)$ and $(\frac{19}{18}\pi, \ \frac{20}{18}\pi)$. Consequently, we choose to conclude the sequential process at time $T_5$. This method offers a means to determine when to cease taking measurements, providing insights into the richness of the collected data to some extent.
The subfigure includes the $1.5 \sigma$ confidence interval of the posterior samples for $q(\theta; \bm \xi)$, within which the actual $q(\theta; \bm \xi)$ value is predicted.
\begin{figure}[h!]
  \centering
  \subfigure[]{
  \includegraphics[width=0.31\textwidth,height=1.9in]{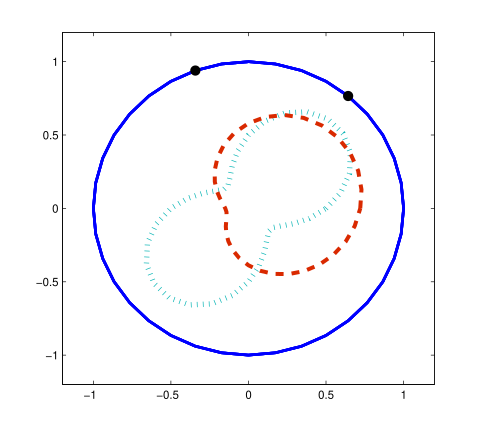}}
  \subfigure[]{
  \includegraphics[width=0.31\textwidth,height=1.9in]{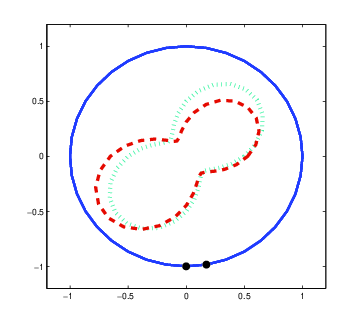}}
  \subfigure[]{
  \includegraphics[width=0.31\textwidth,height=1.9in]{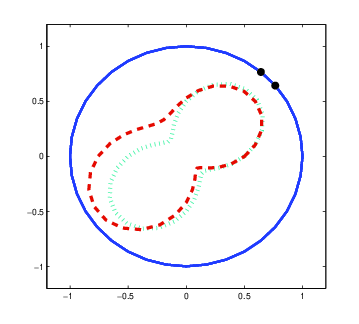}}
  \subfigure[]{
  \includegraphics[width=0.31\textwidth,height=1.9in]{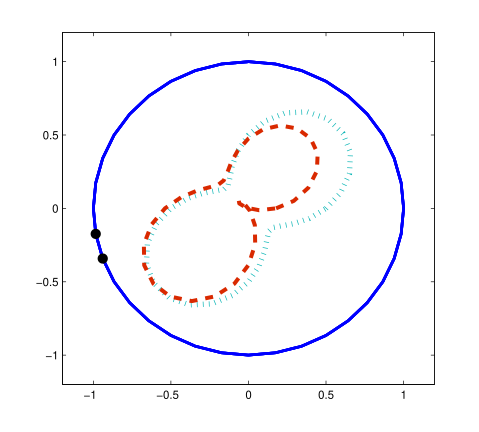}}
  \subfigure[]{
  \includegraphics[width=0.31\textwidth,height=1.9in]{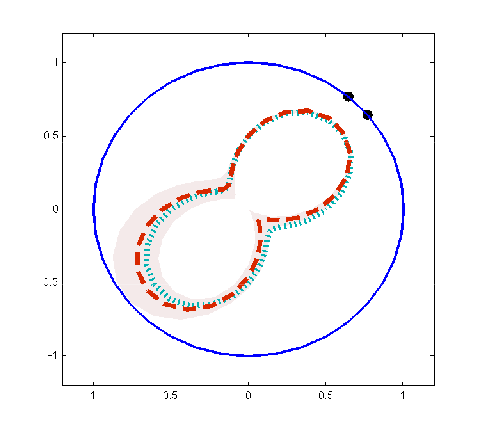}}
  \caption{Posterior mean estimates for prediction samples by the proposed algorithm \ref{algo_pseudo_algo}. The light blue curves represent the target source location defined in Equation \ref{eqn_peanut}, while the red dashed line depicts the sample mean. Black dots situated along the boundary denote the sensor positions, and the detailed determination of sensor locations at different time layers is presented sequentially below. (a) $T_1=0.5$, $(\frac{11}{18}\pi, \ \frac{5}{18}\pi)$. (b) $T_2=1$, $(\frac{28}{18}\pi, \ \frac{27}{18}\pi)$. (c) $T_3=1.5$, $(\frac{4}{18}\pi, \ \frac{5}{18}\pi)$. (d) $T_4=2$, $(\frac{19}{18}\pi, \ \frac{20}{18}\pi)$. (e) $T_5=2.5$, $(\frac{4}{18}\pi, \ \frac{5}{18}\pi)$. The shaded band is plotted with $\mu\pm 1.5\sigma$.  }
  \label{Peanut}
\end{figure}

\section{Concluding remarks}
In this work, we consider the inverse source problem of the heat equation. We use the boundary measurements where the observation sensors can be moved. The Bayesian method is used and we attempt some numerical examples. The numerical results illustrate that the Bayesian approach is feasible to solve such an inverse problem which uses the data from the moving observation points. 

We outline several directions for future research.
First, it is worth exploring the application of the PDE \eqref{PDE} in a more general domain. In this study, we confined our attention to the unit disc in $\mathbb R^2$, which possesses advantageous geometric properties. Expanding our analysis to encompass general domains in $\mathbb R^2$ or $\mathbb R^3$ would undoubtedly introduce additional complexities to the uniqueness argument and reconstruction algorithms.

Second, a significant open question pertains to the optimal strategy for relocating observation sensors. While we proposed a migration rule \eqref{rule of orbit} in this work, we acknowledge that it lacks rigorous mathematical proofs, rendering it somewhat empirical in nature. Consequently, an essential avenue for future research involves a thorough investigation of this migration rule to establish its validity and optimize its performance.

\section*{Acknowledgements.}
Na Ou acknowledges the support of Chinese NSF 11901060, Hunan Provincial NSF 2021JJ40557 and Scientific Research Foundation of Hunan Provincial Education Department 22B0333. Zhidong Zhang is supported by National Natural Science Foundation of China (Grant No. 12101627). Guang Lin acknowledges the support of the National Science Foundation (DMS-2053746, DMS-2134209, ECCS-2328241, and OAC-2311848), and U.S. Department of Energy (DOE) Office of Science Advanced Scientific Computing Research program DE-SC0021142 and DE-SC0023161.

\smallskip
\bigskip

\bibliographystyle{plain}
\bibliography{ref}

\end{document}